\newcommand{\ve}[1]{\mathbf{#1}}
\newcommand{\m}[1]{\mathcal{#1}}
\newcommand{\cF}{\mathcal{F}}
\newcommand{\cV}{\mathcal{V}}
\newcommand{\F}{\mathbb{F}}
\newcommand{\Fxv}{\F\left[\ve x\right]}
\newcommand{\gr}{Gr\"obner\ }
\DeclareMathOperator{\Sh}{Sh}
\newtheorem{thm}{\bfseries Theorem}
\newtheorem{prop}[thm]{\bfseries Proposition}
\newtheorem{cor}[thm]{\bfseries Corollary}
\begin{document}

\title{Standard monomials and extremal point sets\footnote{The results presented are the outcome of a very long research project. During this period the author was affiliated with several institutes: the project started at Budapest University of Technology and Economics (undergraduate studies); was continued at Central European University, Budapest (Phd studies); and final bits were added at Freie Universit\"at Berlin (postdoctoral researcher). This publication is the complete version of an extended abstract that appeared in the proceedings of EuroComb'17\cite{MR3}. Most of the results presented are also part either of the master thesis or of the PhD dissertation of the author~\cite{M1,M2}.}} 

\author{Tam\'as M\'esz\'aros\thanks{Freie Universit\"at Berlin. Supported by the Postdoctoral Fellow program of the Dahlem Research School. E-mail: tamas.meszaros@fu-berlin.de}}

\date{}

\maketitle

\begin{abstract}
We say that a set system $\cF\subseteq 2^{[n]}$ shatters a set $S\subseteq [n]$ if every possible subset of $S$ appears as the intersection of $S$ with some element of $\cF$ and we denote by $\Sh(\cF)$ the family of sets shattered by $\cF$. According to the Sauer-Shelah lemma we know that in general, every set system $\cF$ shatters at least $|\cF|$ sets and we call a set system shattering-extremal if $|\Sh(\cF)|=|\cF|$. In \cite{M1} and \cite{RM}, among other things, an algebraic characterization of shattering-extremality was given, which offered the possibility to generalize the notion to general finite point sets. Here we extend the results obtained for set systems to this more general setting, and as an application, strengthen a result of Li, Zhang and Dong from \cite{LZD}.
\end{abstract}

\begin{quote}
{\bf Keywords:} shattering-extremal set systems, standard monomials, Gr\"obner bases, extremal vector systems
\end{quote}

\section{Introduction} 

\subsubsection*{Shattering-extremal families}

A set system $\cF\subseteq 2^{[n]}$ \emph{shatters} a given set $S \subseteq[n]$ if the elements of $\cF$ intersect it in every possible way, i.e. $2^S = \left\{F \cap S\; : \; F \in \cF  \right\}$. 
The family of subsets of $[n]$ shattered by $\cF$ is denoted by $\Sh(\cF)$.  The size of the largest set shattered by $\cF$ is called the Vapnik-Chervonenkis dimension of $\cF$, and is denoted by $dim_{VC}(\cF)$. Shattering and the
Vapnik-Chervonenkis dimension are present in several fields of both applied and theoretical
mathematics, among others in theoretical machine learning, combinatorics, statistics, probability theory, computer science and logic.

In general, maybe a bit surprisingly, we have that $|\Sh(\cF)|\geq |\cF|$ for every set system $\cF\subseteq 2^{[n]}$. This statement was proved by several authors independently, and is often referred to as the Sauer-Shelah lemma. For a nice simple proof see e.g.~\cite{ARS}. Set systems satisfying the Sauer-Shelah lemma with equality are called \emph{shattering-extremal}, or \emph{s-extremal} for short. They are well studied notions in mathematics with a rich history (see e.g.~\cite{ARS,BLR,BR,F2,KM,M1,M2,MR1,MR2,Mo,RM}), also appearing in the literature under the names \emph{ample classes} (see e.g.~\cite{BCDK,D}) and \emph{lopsided classes}(see e.g.~\cite{L}). For an easy example of an s-extremal family one can consider \emph{down-sets} (respectively \emph{up-sets}), i.e. families $\cF\subseteq 2^{[n]}$ with the property that whenever $H\subseteq F$ (respectively $F\subseteq H$) and $F\in \cF$ then we also have $H\in \cF$. Now if $\cF$, is a down-set (respectively up-set) then $\cF$ is s-extremal, simply because in this case  $\Sh(\cF)= \cF$ (respectively $\Sh(\cF)=\{[n]\setminus F:F\in \cF\}$).

\subsubsection*{Extremal point sets} 

Given some set $F\subseteq [n]$, let $v_F \in \{0,1 \}^n$ be its \emph{characteristic vector}, i.e.~the $i$-th coordinate of $v_F$ is $1$ if $i\in F$ and $0$ otherwise. In this way we can identify a set system $\cF \subseteq 2^{[n]}$ with the vector system
\begin{equation*}
\cV(\cF) = \Big\{ v_F\ :\ F\in \cF \Big\} \subseteq \{0,1 \}^n \subseteq \F^n,
\end{equation*}
where $\F$ is any field. Therefore, if there is an interesting problem about set systems, it is natural to embed the problem into $\F^n$ and ask what happens if we do not restrict ourselves to $0-1$ vectors but allow any set of coordinates. In case of shattering there is a usual way of generalizing the notion to general finite point sets (see e.g.~\cite{Sh}). Let $A\subseteq \F$ be a finite set of size $k$, and $\cV\subseteq A^n$ a finite point set whose elements we view for now as $[n]\rightarrow A$ functions. We say that $\cV$ \emph{shatters} a set $S\subseteq [n]$ if for every function $\ve g:S\rightarrow A$ there exists a function $\ve f \in \cV$ such that $\ve f|_S=\ve g$. As previously, let $\Sh(\cV)$ denote the family of sets shattered by $\cV$. In the definition of s-extremality the Sauer-Shelah lemma played a key role, however in this case we cannot expect a similar inequality to hold. Indeed, as $\Sh(\cV)\subseteq 2^{[n]}$, there are at most $2^n$ sets shattered, but at the same time the size of $\cV$ can be
much larger, up to $k^n$. Instead, we will use an algebraic characterization of s-extremal families \cite{M1,RM} to generalize the notion to arbitrary finite point sets. For this we will briefly review some of the necessary algebraic notions. For a more detailed introduction see Section \ref{sec:prelim} 

Given a finite point set $\cV\subseteq \F^n$ the \emph{vanishing ideal} $I(\cV) \unlhd \F[\textbf{x}]$\footnote{$\F[\textbf{x}]$ is the shorthand notation for the polynomial ring $\F[x_1,x_2,\dots,x_n]$.} is 
\begin{equation*}
I(\cV)= \Big\{ f\in \F[\textbf{x}]\ :\ f(\ve v)=0 \; \ \forall\; \ve v \in \cV \Big\}.
\end{equation*}
A total order $\prec$ on the monomials in $\F[\textbf{x}]$ is a \emph{term order}, if $1$ is the minimal element of $\prec$, and $\prec$ is compatible with multiplication with monomials. One well-known and important term order is the \emph{(standard) lexicographic order, lex order} for short,  where one has $\ve x^{\ve w} \prec \ve x^{\ve u}$\footnote{$\ve x^{\ve w}$ is the shorthand notation for the monomial $x_1^{w_1}x_2^{w_2}\cdots x_n^{w_n}$.} if and only if for the smallest index $i$ with $w_i \neq u_i$ one has $w_i < u_i$. One can build a lex order based on other orderings of the variables as well, so altogether we have $n!$ different lex orders on the monomials of $\Fxv$.

If $\prec$ is a term order and $f \in \F[\textbf{x}]$ a non-zero polynomial, then the \emph{leading monomial} $\text{lm}(f)$ of $f$ is the largest monomial (with respect to $\prec$) appearing with a non-zero coefficient in the canonical form of $f$. Given an ideal $I \unlhd \F[\textbf{x}]$ a monomial is called a \emph{standard monomial} of $I$ (with respect to $\prec$) if it is not a leading monomial of any $f\in I$. We denote by $\text{Lm}(I)$ and $\text{Sm}(I)$ the set of leading and standard monomials of $I$, respectively. In case of finite point sets the number of standard monomials of $I(\cV)$ is the same for every term order, it equals the size of $\cV$. Moreover, if $\cV=\cV(\cF)$ for some set system $\cF\subseteq 2^{[n]}$ then, as proved in \cite{M1,RM}, standard monomials also allow us to characterize s-extremal families.

\begin{prop}[\cite{M1,RM}]\label{extr<=>smua}
$\cF\subseteq 2^{[n]}$ is s-extremal if and only if the standard monomials of $I(\cV(\cF))$ are the same for every (lex) order.
\end{prop}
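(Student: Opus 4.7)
The key algebraic observation is that $x_i^2-x_i\in I(\cV(\cF))$ for every $i\in[n]$, so every standard monomial of $I(\cV(\cF))$ is squarefree under any term order. Writing $\ve x^S:=\prod_{i\in S}x_i$, we may identify the set of standard monomials with respect to a lex order $\prec$ with a subfamily $\m{S}_\prec\subseteq 2^{[n]}$ via $\ve x^S\leftrightarrow S$, and we have $|\m{S}_\prec|=|\cV(\cF)|=|\cF|$ independently of $\prec$.

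The plan is to establish the sandwich
\begin{equation*}
\m{S}_\prec\subseteq\Sh(\cF)\ \text{ for every lex order }\prec,\qquad \Sh(\cF)\subseteq\bigcup_{\prec}\m{S}_\prec,
\end{equation*}
from which both implications follow at once together with the size identity. For the first inclusion I would argue by contraposition: if $S\notin\Sh(\cF)$, pick $T\subseteq S$ with $F\cap S\neq S\setminus T$ for every $F\in\cF$; then
\[
q_T(\ve x):=\prod_{i\in T}(x_i-1)\prod_{j\in S\setminus T}x_j
\]
lies in $I(\cV(\cF))$, and a direct expansion shows that $\ve x^S$ is its leading monomial in every lex order, so $S\notin\m{S}_\prec$ for any $\prec$. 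For the second inclusion, given $S\in\Sh(\cF)$, I would take the lex order in which the variables $\{x_i:i\in S\}$ are the smallest. Supposing for contradiction that $\ve x^S$ were the leading monomial of some $f\in I(\cV(\cF))$, reduction modulo $\{x_j^2-x_j\}_{j\in[n]}$ yields $\tilde f\in I(\cV(\cF))$ with the same leading monomial and only squarefree terms; in this lex order the squarefree monomials $\preceq\ve x^S$ are precisely $\{\ve x^B:B\subseteq S\}$, so $\tilde f=\sum_{B\subseteq S}c_B\ve x^B$ with $c_S\neq 0$. Using shattering, pick $F_T\in\cF$ with $F_T\cap S=T$ for each $T\subseteq S$ and evaluate: $\tilde f(\ve v_{F_T})=\sum_{B\subseteq T}c_B=0$ for every $T\subseteq S$, and M\"obius inversion on $2^S$ then forces $c_S=0$, a contradiction.

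Given the sandwich, both directions of the proposition are immediate. If $\cF$ is s-extremal then $|\Sh(\cF)|=|\cF|=|\m{S}_\prec|$ forces the first inclusion to be an equality for every $\prec$, so $\m{S}_\prec=\Sh(\cF)$ is the same family for every lex order. Conversely, if all lex orders produce a common family $\m{S}$, the two inclusions give $\Sh(\cF)=\m{S}$, whence $|\Sh(\cF)|=|\cF|$. The less routine step is the second inclusion: choosing the lex order adapted to $S$ and squeezing a contradiction out of M\"obius inversion is where the combinatorial shattering condition is genuinely translated into algebra.
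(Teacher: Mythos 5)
Your argument is correct and follows exactly the route the paper indicates: establishing $\Sh(\cF)=\bigcup_{\prec}\text{Sm}_\prec(I(\cV(\cF)))$ (your two inclusions together are precisely this identity, with the polynomials $q_T$ being the paper's $f_{S,H}$) and then invoking $|\text{Sm}_\prec(I(\cV(\cF)))|=|\cF|$ to get both implications. The paper cites this identity to \cite{M1,RM} rather than reproving it, so your writeup just supplies the details of the same proof.
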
 

Motivated by this result we define a finite point set $\cV\subseteq \F^n$ to be \emph{extremal} if $\text{Sm}(I(\cV))$ is the same for every lex order. The study of such extremal point sets was initiated in \cite{M1}. Some generalizations of results about s-extremal families, such as Theorem~\ref{Smdownshiftgen} and Corollary~\ref{downshiftcor}, were already proved here, but the general versions of the main results about set systems, namely Theorem~\ref{genextrGrobner} and Theorem~\ref{genalgothm}, were proven only in \cite{M2}.

At first sight this definition might seem rather artificial, but hopefully in the remainder of the article we will manage to convince the reader that extremal point sets are very natural generalizations of s-extremal set systems.

\subsubsection*{Main results}

Let $\cV$ be a finite point set. As we will see in Section~\ref{sec:prelim}, we may without loss of generality assume that $\m V\subseteq \{0,1,\dots,k-1\}^n\subseteq \mathbb{Q}^n$.
For $1\leq i\leq n$, the $i$-section of $\m V$ at the elements $\alpha_1,\dots,\alpha_{i-1},\alpha_{i+1},\dots,\alpha_n$ is defined as
\begin{equation*}
\m V_{i}(\alpha_1,\dots,\alpha_{i-1},\alpha_{i+1},\dots,\alpha_n)=\Big\{\alpha\ :\ (\alpha_1,\dots,\alpha_{i-1},\alpha,\alpha_{i+1},\dots,\alpha_n)\in \m V\Big\}.
\end{equation*}
Using $i$-sections one can define the \emph{downshift} of $\cV$ at coordinate $i$: $D_i(\m V)$ is the unique point set in $\{0,1,\dots,k-1\}^n$, for which
\begin{equation*}
(D_i(\m V))_i(\alpha_1,..,\alpha_{i-1},\alpha_{i+1},..,\alpha_n)=\Big\{0,1,..,|\m V_i(\alpha_1,..,\alpha_{i-1},\alpha_{i+1},..,\alpha_n)|-1\Big\}
\end{equation*}
whenever $\m V_i(\alpha_1,\dots,\alpha_{i-1},\alpha_{i+1},\dots,\alpha_n)$ is non-empty, and is empty otherwise. Note that when $k=2$ we recover the well-known and widely used downshift operation defined on set systems. For indices $i_1, i_2, \dots, i_{\ell}$ let 
\begin{equation*}
D_{i_1,i_2,\dots, i_{\ell}}(\cV):=D_{i_1}(D_{i_2}(\dots(D_{i_{\ell}}(\cV)))).
\end{equation*}
As demonstrated by the following theorem, downshifts are pretty handy when one wants to compute standard monomials.

\begin{thm}\label{Smdownshiftgen}
Let $\m V\subseteq\{0,1,\dots,k-1\}^n\subseteq \mathbb{Q}^n$ be a finite point set and $\prec$ the lex order order for which $x_{i_1}\succ x_{i_2}\succ \dots \succ x_{i_n}$. Then
\begin{equation*}
\text{Sm}(I(\cV))=D_{i_n,i_{n-1},\dots,i_1}(\cV).
\end{equation*}
\end{thm}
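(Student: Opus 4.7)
The plan is induction on $n$, the number of variables. The base case $n=1$ is immediate: $I(\cV)$ is the principal ideal generated by $\prod_{a\in\cV}(x-a)$, so $\text{Sm}(I(\cV))=\{1,x,\dots,x^{|\cV|-1}\}$, whose exponent set is $\{0,1,\dots,|\cV|-1\}=D_1(\cV)$.

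For the inductive step, relabel so that $i_1=1$, meaning $x_1$ is largest under $\prec$. Stratify $\cV$ by its first coordinate, $\cV=\bigsqcup_{a=0}^{k-1}\{a\}\times\cW_a$, and for each $j\ge 0$ set
\[
\cU_j=\Big\{\ve b\in\{0,\dots,k-1\}^{n-1}\ :\ |\{a:(a,\ve b)\in\cV\}|\ge j+1\Big\},
\]
so that $\cU_0\supseteq\cU_1\supseteq\cdots$ and $D_1(\cV)=\bigsqcup_{j\ge 0}\{j\}\times\cU_j$. Identifying a point $(\alpha_1,\dots,\alpha_n)$ with the monomial $x_1^{\alpha_1}\cdots x_n^{\alpha_n}$, the heart of the proof is the claim
\[
\text{Sm}(I(\cV))=\bigsqcup_{j\ge 0}x_1^j\cdot\text{Sm}(I(\cU_j)).
\]

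Both sides have cardinality $|\cV|$, since $\sum_j|\cU_j|=\sum_{\ve b}|\{a:(a,\ve b)\in\cV\}|=|\cV|$, so it suffices to prove $\supseteq$. Suppose $m\in\text{Sm}(I(\cU_j))$ but $x_1^jm=\text{lm}(f)$ for some $f\in I(\cV)$. Writing $f=\sum_{i=0}^{j}f_i(x_2,\dots,x_n)\,x_1^i$, for each $\ve b\in\cU_j$ the univariate polynomial $p_{\ve b}(t):=\sum_{i=0}^{j}f_i(\ve b)\,t^i$ has degree at most $j$ and vanishes at the $\ge j+1$ distinct values $a$ with $(a,\ve b)\in\cV$, forcing $p_{\ve b}\equiv 0$ and in particular $f_j(\ve b)=0$. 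Hence $f_j\in I(\cU_j)$, and since the chosen lex order extracts the highest power of $x_1$ first, $\text{lm}(f_j)=m$, contradicting $m\in\text{Sm}(I(\cU_j))$.

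Applying the induction hypothesis to each $\cU_j\subseteq\{0,\dots,k-1\}^{n-1}$ with the inherited lex order $x_{i_2}\succ\cdots\succ x_{i_n}$ yields $\text{Sm}(I(\cU_j))=D_{i_n,\dots,i_2}(\cU_j)$. Because $D_{i_2},\dots,D_{i_n}$ act only on coordinates $2,\dots,n$, they preserve the stratification by the first coordinate, so
\[
\text{Sm}(I(\cV))=\bigsqcup_{j\ge 0}\{j\}\times D_{i_n,\dots,i_2}(\cU_j)=D_{i_n,\dots,i_2}(D_1(\cV))=D_{i_n,\dots,i_1}(\cV),
\]
closing the induction. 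The main obstacle is the displayed key claim, and its crux is the degree argument: any $f\in I(\cV)$ of $x_1$-degree $j$ is forced to have its $x_1^j$-coefficient vanish on $\cU_j$, precisely reflecting the staircase shape of $D_1(\cV)$.
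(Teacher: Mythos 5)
Your proof is correct, but it takes a genuinely different route from the paper's. The paper inducts on $n$ by stratifying $\cV$ along the \emph{smallest} variable $x_{i_n}$: each slice $\cV_\beta$ gets the shorter chain $D_{i_{n-1},\dots,i_1}$ applied by induction, and the final outer operation $D_{i_n}$ is matched against a recursive description of lex standard monomials due to Felszeghy, R\'ath and R\'onyai (recalled in the preliminaries), which says $\ve x^{\ve w}\in\text{Sm}(I(\cV))$ iff at least $w_{i_n}+1$ slices satisfy $\ve x^{\ve w'}\in\text{Sm}(I(\cV_\beta))$ --- precisely the definition of $D_{i_n}$. You instead stratify along the \emph{largest} variable $x_{i_1}=x_1$, so the innermost $D_1$ is the operation absorbed by the analysis, and the inductive call is to the slices $\cU_j$ of $D_1(\cV)$ rather than to the slices of $\cV$ itself. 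The key identity $\text{Sm}(I(\cV))=\bigsqcup_j x_1^j\cdot\text{Sm}(I(\cU_j))$, which plays the role the FRR recursion plays in the paper, is not cited from the literature but proved from scratch: the double-count $\sum_j|\cU_j|=|\cV|$ reduces matters to one inclusion, and the degree argument (a witness $f$ with $\text{lm}(f)=x_1^jm$ would have $x_1$-degree exactly $j$, forcing its top coefficient $f_j$ to vanish on $\cU_j$ by counting roots of a degree-$\le j$ univariate polynomial) is clean and correct. The net effect is that your proof is self-contained where the paper's leans on the cited FRR lemma; the paper's version is correspondingly shorter. Both arguments are sound.
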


Theorem \ref{Smdownshiftgen} is a natural generalization of the same result for set systems (see Proposition~\ref{Smdownshift} below). 

According to Theorem~\ref{Smdownshiftgen} we could have defined extremal vector systems fully combinatorially, as demonstrated by the following corollary. 

\begin{cor}\label{downshiftcor}
A finite point set $\m V\subseteq\{0,1,\dots,k-1\}^n$ is extremal if and only if $D_{\pi(n),\pi(n-1),\dots,\pi(1)}(\cV)$ is the same for every permutation $\pi$ of $[n]$.
\end{cor}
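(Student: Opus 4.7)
The plan is to apply Theorem~\ref{Smdownshiftgen} essentially verbatim, observing that the statement of extremality is already expressed in terms of standard monomials for the various lex orders. First I would record the obvious bookkeeping fact that the $n!$ lex orders on $\F[\ve x]$ are in bijection with the permutations $\pi$ of $[n]$: the lex order attached to $\pi$ is the one with $x_{\pi(1)} \succ x_{\pi(2)} \succ \dots \succ x_{\pi(n)}$. Hence by definition $\cV$ is extremal if and only if the set of standard monomials $\text{Sm}(I(\cV))$ takes the same value for every such $\pi$.

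Next I would make explicit the identification between exponent vectors and monomials: a point $(a_1,\dots,a_n) \in \{0,1,\dots,k-1\}^n$ corresponds to the monomial $x_1^{a_1}\cdots x_n^{a_n}$. Since downshifts keep $\cV$ inside the cube $\{0,1,\dots,k-1\}^n$, this map is a bijection on the relevant sets, so the equality asserted by Theorem~\ref{Smdownshiftgen} can be read either as an equality of point sets or as an equality of monomial sets. Under this dictionary Theorem~\ref{Smdownshiftgen} says exactly that for the lex order associated with $\pi$, the standard monomials correspond to the point set $D_{\pi(n),\pi(n-1),\dots,\pi(1)}(\cV)$.

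The corollary then follows by combining these two observations: extremality is the statement that $\text{Sm}(I(\cV))$ is independent of $\pi$, and via the identification above this is literally the same as the statement that $D_{\pi(n),\dots,\pi(1)}(\cV)$ is independent of $\pi$. Both directions are immediate, one by substituting Theorem~\ref{Smdownshiftgen} into the definition of extremality, the other by reading the same equation in reverse. There is no real obstacle; the only thing that needs attention is making sure the translation between monomials and lattice points is clean, and that the order of indices in the downshift composition matches the order dictated by the lex ordering in Theorem~\ref{Smdownshiftgen} (namely, the variable listed largest in the lex order is the outermost downshift).
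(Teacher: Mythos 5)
Your proposal is correct and matches the paper's (implicit) derivation: the corollary is presented there as an immediate consequence of Theorem~\ref{Smdownshiftgen}, obtained by substituting the formula $\text{Sm}(I(\cV))=D_{\pi(n),\dots,\pi(1)}(\cV)$ into the definition of extremality over all lex orders, exactly as you do. Your care about the index reversal (largest lex variable is the outermost downshift) is precisely the right bookkeeping point and agrees with the statement of Theorem~\ref{Smdownshiftgen}.
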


We define a finite points set $\cV\subseteq \{0,1,\dots,k-1\}$ to be a \emph{down-set} (respectively \emph{up-set}), if $\ve v\in \cV$ and $\ve w\leq \ve v$ (respectively $\ve v\leq \ve w$) - coordinate-wise - imply $\ve w\in \cV$. Note that when $\cV$ is a down-set then $D_{i}(\cV)=\cV$ for every $i\in [n]$. Therefore Corollary~\ref{downshiftcor} also shows that, in accordance with the special case of set systems, down-sets are extremal. 

\bigskip

When working with a polynomial ideal, a nice ideal basis can facilitate things. An important example of such nice bases are Gr\"obner bases. For a term order $\prec$ and an ideal $I \unlhd \F[\textbf{x}]$ a finite subset $\mathbb{G} \subseteq I$ is called a \emph{Gr\"obner basis of I} with respect to $\prec$ if for every $f\in I$ there exists a $g\in \mathbb{G}$ such that $\text{Lm}(g)$ divides $\text{Lm}(f)$. $\mathbb{G}$ is a \emph{universal Gr\"obner basis} if it is a Gr\"obner basis for every term order. Gr\"obner bases have many nice properties, for details the interested reader may consult e.g.~\cite{AL}.

We call a polynomial $f(\ve x)\in\Fxv$ \emph{degree dominated} with \emph{dominating term} $\ve x^{\ve w}$ if it is of the form
\begin{equation*}
f(\ve x)=\ve x^{\ve w}+\sum\limits_{i=1}^{\ell}\alpha_i \ve x^{\ve v_i},
\end{equation*}
where $\ve x^{\ve v_i}\neq\ve x^{\ve w}$ and $\ve x^{\ve v_i}|\ve x^{\ve w}$ for every $i$. 

Just like s-extremal set systems, extremal point sets also admit a nice characterization using Gr\"obner bases. 

\begin{thm}\label{genextrGrobner}
A finite set of vectors $\cV\subseteq \{0,1,...,k-1\}^n$ is extremal if and only if there is a finite family $\mathcal{G}\subseteq \mathbb{R}[\ve
x]$ of degree dominated polynomials that form a universal \gr basis of $I(\cV)$.
\end{thm}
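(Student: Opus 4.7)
The ``if'' direction is quick. Suppose $\mathcal{G}\subseteq I(\cV)$ is a universal \gr basis of degree dominated polynomials. Since $\ve x^{\ve v}\mid\ve x^{\ve w}$ forces $\ve x^{\ve v}\preceq \ve x^{\ve w}$ in every term order, the dominating term of each $g\in\mathcal{G}$ coincides with $\text{lm}_\prec(g)$ for every term order $\prec$. Hence the monomial ideal $\text{Lm}_\prec(I(\cV))=(\text{lm}_\prec(g):g\in\mathcal{G})$ is the same for every $\prec$, so $\text{Sm}_\prec(I(\cV))$ is constant in $\prec$; in particular, $\cV$ is extremal.

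For the converse, assume $\cV$ is extremal and let $S:=\text{Sm}(I(\cV))$ be the common set of lex standard monomials. Because $\text{Sm}(I(\cV))$ is a down-set under divisibility, the minimal (under divisibility) monomials outside $S$ form a finite set $W$ by Dickson's lemma. As $S$ is an $\mathbb{R}$-basis of $\mathbb{R}[\ve x]/I(\cV)$, each $\ve x^{\ve w}\in W$ has a unique representative $r_{\ve w}$ in the $\mathbb{R}$-span of $S$, yielding $g_{\ve w}:=\ve x^{\ve w}-r_{\ve w}\in I(\cV)$. Crucially, $r_{\ve w}$ depends only on $S$, not on the choice of lex order. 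I take $\mathcal{G}:=\{g_{\ve w}:\ve x^{\ve w}\in W\}$ as the candidate universal \gr basis.

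The heart of the proof — and the step I expect to be the main obstacle — is showing that each $g_{\ve w}$ is degree dominated with dominating term $\ve x^{\ve w}$. Fix an arbitrary lex order $\prec$. Every monomial in $-r_{\ve w}$ lies in $S$ and hence cannot be the $\prec$-leading monomial of any non-zero element of $I(\cV)$; since $\ve x^{\ve w}$ appears in $g_{\ve w}$ with coefficient $1$ (no cancellation, as $\ve x^{\ve w}\notin S$), the polynomial $g_{\ve w}$ is non-zero and its $\prec$-leading monomial is forced to be $\ve x^{\ve w}$. Therefore $\ve x^{\ve v}\prec\ve x^{\ve w}$ holds for every $\ve x^{\ve v}\in\text{supp}(r_{\ve w})$ in all $n!$ lex orders. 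If such an $\ve x^{\ve v}$ did not divide $\ve x^{\ve w}$, then $v_i>w_i$ for some index $i$, and the lex order with $x_i$ as the largest variable would give $\ve x^{\ve v}\succ\ve x^{\ve w}$, a contradiction. Thus $\ve x^{\ve v}\mid\ve x^{\ve w}$, properly since $\ve x^{\ve v}\in S$ while $\ve x^{\ve w}\notin S$.

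To finish, I verify that $\mathcal{G}$ is a universal \gr basis. Fix an arbitrary term order $\prec'$; by degree domination $\text{lm}_{\prec'}(g_{\ve w})=\ve x^{\ve w}$, so the monomial ideal $M:=(\ve x^{\ve w}:\ve x^{\ve w}\in W)$ is contained in $\text{Lm}_{\prec'}(I(\cV))$. Because $W$ consists of the minimal non-$S$ monomials, the monomials outside $M$ are precisely $S$; hence $\dim_{\mathbb{R}}\mathbb{R}[\ve x]/M=|S|=|\cV|=\dim_{\mathbb{R}}\mathbb{R}[\ve x]/\text{Lm}_{\prec'}(I(\cV))$. Combined with the inclusion, this forces $M=\text{Lm}_{\prec'}(I(\cV))$, so $\mathcal{G}$ is a \gr basis with respect to $\prec'$, and by arbitrariness of $\prec'$, a universal one.
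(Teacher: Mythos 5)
Your proof is correct and follows essentially the same route as the paper: for the forward direction you take the standard representations of the minimal non-standard monomials, show they are degree dominated by varying the lex order with $x_i$ largest, and then verify the Gr\"obner-basis property; for the converse you use degree domination to fix the leading monomials across all term orders. One point worth flagging: your ``if'' direction is actually more careful than the paper's. The paper claims that having a common Gr\"obner basis for every term order ``without knowing anything about the members'' already forces extremality, but this is not true --- for instance, $\mathcal{G}=\{x-y,\,x^2-x,\,y^2-y\}$ is a universal Gr\"obner basis of $I(\{(0,0),(1,1)\})$, yet the lex standard monomials are $\{1,y\}$ for $x\succ y$ and $\{1,x\}$ for $y\succ x$. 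The formula $\text{Lm}(I)=\{\ve x^{\ve u}:\exists\,g\in\mathcal{G},\ \text{lm}(g)\mid\ve x^{\ve u}\}$ depends on the term order through $\text{lm}(g)$, so one must know that each $\text{lm}(g)$ is the same in all orders; you correctly supply this via degree domination, whereas the paper leaves it implicit while asserting it isn't needed. Your explicit dimension count ($\dim\mathbb{R}[\ve x]/M=|S|=|\cV|$) at the end of the forward direction is also a cleaner justification than the paper's ``clearly a Gr\"obner basis.''
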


Theorem \ref{genextrGrobner} is a natural generalization of the same result for set systems (see Theorem~\ref{ExtremalGroebner} below). 

\bigskip

A term order $\prec$ is called an \emph{elimination order} with respect to the variable $x_i$ if $x_i$ is larger than any monomial from $\F[x_1 \dots,x_{i-1},x_{i+1},\dots,x_n]$. In particular in this case if $\ve u$ and $\ve v$ are two exponent vectors for which $u_i<v_i$, then necessarily $\ve x^{\ve u}\prec \ve x^{\ve v}$. As an example one can consider any lex order where $x_i$ is the largest variable.

\begin{thm}\label{genalgothm}
For $1\leq i\leq n$ let $\prec_i$ be an elimination order with respect to $x_i$. If $\cV\subseteq \{0,1,...,k-1\}^n$ is not extremal, then among these we can find two term orders for which the sets of standard monomials of $I(\cV)$ differ.
\end{thm}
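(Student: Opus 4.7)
The plan is to prove the contrapositive: if $\text{Sm}_{\prec_i}(I(\cV))$ is a common set $\mathcal{M}$ for every $i = 1, \dots, n$, then $\cV$ is extremal. I will construct a universal \gr basis of degree-dominated polynomials and invoke Theorem~\ref{genextrGrobner}.

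Since $\text{Sm}_{\prec_i}(I(\cV))$ is independent of $i$, so is the leading-term ideal; let $M$ denote its minimal monomial generating set. Moreover, the common set $\mathcal{M}$ is a basis of the finite-dimensional quotient $\Fxv/I(\cV)$, so each monomial $m \in M$ admits a unique representation modulo $I(\cV)$ as a linear combination of elements of $\mathcal{M}$. Equivalently, there is a unique polynomial $g_m = m - r_m \in I(\cV)$ with $r_m$ supported on $\mathcal{M}$, and the crucial point is that $g_m$ depends only on $\mathcal{M}$, not on any particular $\prec_i$.

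The heart of the argument is to verify that $g_m$ is degree-dominated with dominating term $m$. Every monomial $\ve x^{\ve w} \neq m$ in the support of $g_m$ is a standard monomial, hence $\ve x^{\ve w} \prec_i m$ for every $i$. Because $\prec_i$ is an elimination order with respect to $x_i$, the inequality $w_i > m_i$ would force $\ve x^{\ve w} \succ_i m$. Hence $w_i \leq m_i$ for every coordinate $i$, which is exactly the statement that $\ve x^{\ve w}$ divides $m$. This step is where all $n$ of the elimination orders are used simultaneously, and it is what makes the hypothesis tight.

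A degree-dominated polynomial has leading term equal to its dominating term under every term order, so each $g_m$ has leading monomial $m$ under an arbitrary term order $\prec$. To finish, a short counting argument establishes that $\{g_m : m \in M\}$ is a \gr basis for every term order: the standard monomials of $I(\cV)$ under $\prec$ have size $|\cV| = |\mathcal{M}|$ and are contained in $\mathcal{M}$ (since $M$ already lies in the leading-term ideal via the $g_m$'s), so they coincide with $\mathcal{M}$. Hence $\{g_m : m \in M\}$ is a universal \gr basis of degree-dominated polynomials, and Theorem~\ref{genextrGrobner} yields extremality.
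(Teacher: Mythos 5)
Your proof is correct and rests on exactly the same key mechanism as the paper's: using that each $\prec_i$ is an elimination order in $x_i$ to force any non-dominating monomial in the standard representation to divide the leading one, hence yielding degree domination, and then closing with the counting argument $|\text{Sm}_\prec(I(\cV))|=|\cV|=|\mathcal{M}|$. The only cosmetic difference is that you package the conclusion via a universal Gr\"obner basis and Theorem~\ref{genextrGrobner}, whereas the paper concludes directly as in Proposition~\ref{lex<=>all term orders}; in fact your own counting step already gives $\text{Sm}_\prec(I(\cV))=\mathcal{M}$ for all $\prec$, so the appeal to Theorem~\ref{genextrGrobner} is a redundant (but harmless) extra wrapper.
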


Theorem~\ref{genalgothm} is a natural generalization of the same result for set systems (see Theorem~\ref{algthm} below) and it has several interesting consequences. 

First of all, it means that in the definition of extremality it would have been enough to require that the family of standard monomials is the same for a particular family of lex orders of size $n$. 

Next, as standard monomials can be computed in $O(n|\m V|k)$ time \cite{FRR}, Theorem~\ref{genalgothm}, just like in the special case of set systems, also results an efficient, $O(n^2|\m V|k)$ time algorithm for deciding whether a finite point set $\cV\subseteq
\{0,1,...,k-1\}^n$ is extremal or not. 

Finally, Theorem~\ref{genalgothm} also allows a strengthening of a result by Li, Zhang and Dong from \cite{LZD}, where they investigated the standard monomials of zero dimensional polynomial ideals. An ideal $I\triangleleft \Fxv$ is called \emph{zero dimensional} if the factor space $\Fxv/I$ is a finite dimensional $\F$-vector space. It is rather easy to see that vanishing ideals of finite point sets are special types of zero dimensional ideals. For $1\leq i\leq n$ let again $\prec_i$ be an elimination order with respect to $x_i$. Part $(2)\Leftrightarrow(3)$ of Theorem~4 in \cite{LZD} states that if $\F$ has characteristic zero, then the standard monomials of any zero dimensional ideal $I\triangleleft \Fxv$ are the same for every term order if and only if they are the same for $\prec_1,\dots,\prec_n$. The following result generalizes this to arbitrary fields instead of fields of characteristic zero.

\begin{thm}\label{LZDgen}
Let $\F$ be an arbitrary field and for $1\leq i\leq n$ let $\prec_i$ be an elimination order with respect to $x_i$. Then the standard monomials of any zero dimensional ideal $I\triangleleft \Fxv$ are the same for every term order if and only if they are the same for $\prec_1,\dots,\prec_n$.
\end{thm}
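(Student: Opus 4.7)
The forward direction is immediate. For the converse, suppose $I \triangleleft \Fxv$ is zero-dimensional with $\text{Sm}(I,\prec_i)=B$ for every $i=1,\dots,n$. The plan is to exhibit, directly from $I$, a family of degree dominated polynomials in $I$ whose dominating terms generate a single fixed monomial ideal $M$ as the initial ideal under every term order, so that a dimension count forces $\text{Sm}(I,\prec)=B$ for all $\prec$.

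Since $B$ is an $\F$-basis of $\Fxv/I$, every monomial $\ve x^{\ve w}$ admits a unique normal form $\overline{\ve x^{\ve w}}$ in the $\F$-linear span of $B$ satisfying $\ve x^{\ve w}-\overline{\ve x^{\ve w}}\in I$, and this normal form is intrinsic to $I$: it does not depend on the Gr\"obner basis used to compute it. For each fixed $i$ I would compute $\overline{\ve x^{\ve w}}$ by reducing $\ve x^{\ve w}$ modulo the reduced Gr\"obner basis $G_i$ of $I$ with respect to $\prec_i$. Because $\prec_i$ is an elimination order for $x_i$, every non-leading term $\ve x^{\ve t}$ of any element of $G_i$ whose leading term is $\ve x^{\ve w'}$ satisfies $t_i\le w'_i$. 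A reduction step replaces a term $\ve x^{\ve s}$ (with $s_i\le w_i$) by terms of shape $\ve x^{\ve s-\ve w'+\ve t}$, whose $i$-th coordinate $s_i-w'_i+t_i$ is at most $s_i$; thus the invariant ``every term has $x_i$-degree at most $w_i$'' survives each reduction step, and $\overline{\ve x^{\ve w}}$ is supported on $B\cap\{\ve x^{\ve v}:v_i\le w_i\}$.

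Because the normal form is intrinsic, it must simultaneously satisfy this support bound for every $i=1,\dots,n$, hence is supported on $B\cap\{\ve x^{\ve v}:\ve x^{\ve v}\mid \ve x^{\ve w}\}$. Consequently $f_{\ve w}:=\ve x^{\ve w}-\overline{\ve x^{\ve w}}\in I$ is degree dominated with dominating term $\ve x^{\ve w}$. Letting $\ve x^{\ve w}$ range over the minimal generators of $M:=\langle\text{monomials not in }B\rangle$ produces a finite family $\mathcal{G}\subseteq I$ of degree dominated polynomials. Under any term order $\prec$ the dominating term of such a polynomial is automatically its leading monomial, so $M\subseteq \operatorname{in}_{\prec}(I)$; the identity $|\text{Sm}(I,\prec)|=\dim_{\F}\Fxv/I=|B|$ then forces $\operatorname{in}_{\prec}(I)=M$ and hence $\text{Sm}(I,\prec)=B$ for every term order $\prec$.

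The main delicate point is the reduction-invariant step: one has to verify carefully that the $x_i$-degree bound survives arbitrary sequences of reductions and multiplications, which amounts to a short induction using only that $\prec_i$ is an elimination order. Once this is in place the rest of the argument is purely formal, and in essence it recovers for an arbitrary zero-dimensional $I$ exactly the ``universal Gr\"obner basis of degree dominated polynomials'' structure that Theorem~\ref{genextrGrobner} furnishes for vanishing ideals of extremal point sets.
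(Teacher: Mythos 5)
Your proof is correct, and it follows the same overall strategy as the paper: exhibit degree dominated polynomials in $I$ with dominating terms generating the putative initial ideal, then use the fact that $|\text{Sm}(I,\prec)|=\dim_\F\Fxv/I$ is independent of $\prec$ to force equality. The paper itself proves Theorem~\ref{LZDgen} simply by pointing back to the proof of Theorem~\ref{genalgothm} and noting that zero-dimensionality is all that was used there.

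Where you diverge is the mechanism for showing the normal form $\overline{\ve x^{\ve w}}$ is supported only on divisors of $\ve x^{\ve w}$. You run the division algorithm modulo the reduced Gr\"obner basis $G_i$, observe that an elimination order forces non-leading terms of elements of $G_i$ to have $x_i$-degree no larger than that of the corresponding leading term, and propagate an $x_i$-degree bound through the reduction as an invariant; then you invoke the intrinsic nature of the normal form to combine the $n$ one-variable bounds into divisibility. The paper's argument is a shorter contradiction: since $f=\ve x^{\ve u}-\overline{\ve x^{\ve u}}\in I$ and every monomial of $\overline{\ve x^{\ve u}}$ lies in $\m S=\text{Sm}(I,\prec_j)$ for all $j$, the leading monomial of $f$ under each $\prec_j$ must be the one monomial outside $\m S$, namely $\ve x^{\ve u}$; if some $\ve x^{\ve v}$ in the support had $v_i>u_i$, the elimination property of $\prec_i$ would give $\ve x^{\ve v}\succ_i\ve x^{\ve u}$, contradicting $\text{lm}_{\prec_i}(f)=\ve x^{\ve u}$. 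That one-line observation about $\text{lm}(f)$ replaces your reduction-invariant induction entirely, which is why the paper does not need to mention reduced Gr\"obner bases or track degrees through division steps. Your version is more explicitly algorithmic and perhaps easier to verify line by line; the paper's is slicker and avoids any commitment to a particular reduction procedure. Both are sound, and both correctly apply to arbitrary zero-dimensional ideals over an arbitrary field.
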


\smallskip

The rest of the paper is organized as follows. After the Introduction, in Section~\ref{sec:prelim} we give a more detailed description of the combinatorial and algebraic notion used. Then, In Section~\ref{sec:proofs} we give the proof of our main results. Finally, in Section~\ref{sec:concluding} we make some concluding remarks. 

\section{Preliminaries}\label{sec:prelim}

In this subsection we provide the reader with some details of the combinatorial and algebraic notions that appeared so far.

\subsubsection*{Vanishing ideals of set systems}

Let $\cF \subseteq 2^{[n]}$ and consider the vanishing ideal $I(\cF)=I(\cV(\cF)) \unlhd \F[\textbf{x}]$. Note that as $\cV(\cF)$ contains only $0-1$ vectors, we always have $\big\{x_i^2 -x_i\ :\ i\in [n] \big\} \subseteq I(\cF)$. In particular, $x_i^2$ is a leading monomial for every term order, implying that the standard monomials are necessarily square-free. By identifying a set $M\subseteq [n]$ with the square-free monomial $\ve x_M=\prod_{i\in M}x_i$ we can view $\text{Sm}(I(\cF))$ also as a subfamily of $2^{[n]}$. We will use these two views interchangeably and it will be always clear from the context which view is used.

\subsubsection*{Standard monomials}

A set $\mathcal{S}$ of monomials is called an \emph{up-set} (respectively \emph{down-set}) if $\ve x^{\ve u}\in \mathcal{S}$ and $\ve x^{\ve u}\mid \ve x^{\ve w}$ (respectively $\ve x^{\ve w}\mid \ve x^{\ve u}$) imply $\ve x^{\ve w}\in \mathcal{S}$. By the definition of ideals and leading monomials $\text{Lm}(I)$ is an up-set and hence $\text{Sm}(I)$, as its complement, is necessarily a down-set. Another basic fact about standard monomials is that the canonical image of $\text{Sm}(I)$ forms a linear basis of the $\F$-vector space $\F[\textbf{x}]/I$.

Moreover, in the special case when $I=I(\cV)$ for some finite point set $\cV\subseteq \F^n$ the factor space $\F[\textbf{x}]/I(\cV)$ is isomorphic to the space of all $\cV\rightarrow \F$ functions and as such has dimension $|\cV|$. This shows $|\text{Sm}(I(\cV))|=|\cV|$, as was already remarked in the Introduction.

A detailed study of the lex standard monomials of vanishing ideals of finite point sets was done by Felszeghy, R\'ath and R\'onyai in \cite{FRR}. Here we recall some of their results. Let $\cV\subseteq \F^n$ be a finite point set and for $\alpha\in \F$ let
\begin{equation*}
\cV_{\alpha}=\Big\{\ve v \in \F^{n-1}\ :\ (\ve v,\alpha)\in \cV\Big\}\subseteq \F^{n-1}.
\end{equation*}
Further let $\prec$ and $\prec_{\widehat{n}}$ be the standard lex order on $\F[x_1,\dots,x_n]$ and $\F[x_1,\dots,x_{n-1}]$, respectively. Then for $n>1$ the monomial $\ve x^{\ve w}$ is a standard monomial of $I(\cV)$ with respect to $\prec$ if and only if there are at least $w_n+1$ elements $\alpha\in \F$ for which $x_1^{w_1}\cdots x_{n-1}^{w_{n-1}}$ is a standard monomial of $I(\cV_{\alpha})$ with respect to $\prec_{\widehat{n}}$. Clearly, an analogous result holds for every possible lex order. As a base case for the above recursion, note that for $n=1$ if $\cV\subseteq \F$ then $x^w\in \text{Sm}(I(\cV))$ if and only if $w<|\cV|$. 

Using this recursion one can show that given any lex order $\text{Sm}(I(\cV))$ can be computed in linear, $O(n|\cV|k)$ time, where $k$ is the number of different coordinate values appearing in $\cV$.

Fix now some finite set $A\subseteq \F$ such that $\cV\subseteq A^n$. Note that as $\cV$ is finite, such an $A$ necessarily exists. Further let $\varphi_j:A\rightarrow \widehat{\F}$, $j=1,2,\dots,n$ be  injective functions to an arbitrary field $\widehat{\F}$. Denote by $\widehat{\cF}$ the image of $\m V$ under the action of $(\varphi_1,\dots,\varphi_n)$, i.e. 
\begin{equation*}
\widehat{\m V}=\Big\{(\varphi_1(v_1),\dots,\varphi_n(v_n))\ :\ (v_1,\dots,v_n)\in \m V\Big\}.
\end{equation*}
Then the standard monomials of $I(\cV)\triangleleft \Fxv$ are the same as those of $I(\widehat{\cV})\triangleleft \widehat{\F}[\ve x]$ for any lexicographic term order. We will refer to this property as the \emph{universality property of standard monomials}. An important corollary of this property is that it is enough to consider the case $\m V\subseteq \{0,1,\dots,k-1\}^n\subseteq \mathbb{Q}^n$.

\smallskip

In the case of finite set systems, apart from the above recursive description, lex standard monomials can also be computed using downshift operations. In this special case, which we recover by putting $k=2$ in the general definition, it is easier to describe the downshift operation in terms of set operations. For a set system $\cF\subseteq 2^{[n]}$ its downshift by the element $i\in [n]$ as
\begin{equation*}
D_i(\cF)=\Big\{F\setminus\{i\}\ :\ F\in \cF\Big\}~\bigcup~\Big\{F\ :\ F\in
\cF,\ i\in F,\ F\setminus\{i\}\in \cF\Big\}.
\end{equation*}
It is not hard to see that $|D_i(\m F)|=|\m F|$ and $\Sh(D_i(\m F))\subseteq \Sh(\m F)$, hence $D_i$ preserves s-extremality (see e.g.~\cite{BR}). As before, for indices $i_1,i_2,\dots, i_{\ell}$ we let $D_{i_1,i_2,\dots,i_{\ell}}(\cF):=D_{i_1}(D_{i_2}(\dots(D_{i_{\ell}}(\cF))))$. 

\begin{prop}[\cite{M1}]\label{Smdownshift}
Let $\cF \subseteq 2^{[n]}$ and $\prec$ be a lex order for which $x_{i_1}\succ x_{i_2}\succ \dots \succ x_{i_n}$. Then
\begin{equation*}
\text{Sm}(I(\cF))=D_{i_n,i_{n-1},\dots,i_1}(\cF).
\end{equation*}
\end{prop}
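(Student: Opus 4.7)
The plan is to prove Proposition~\ref{Smdownshift} by induction on $n$, using the Felszeghy--R\'ath--R\'onyai recursion for lex standard monomials recalled just above the statement. After relabeling variables, I may assume without loss of generality that $i_j=j$, so that the goal becomes $\text{Sm}(I(\cF))=D_{n,n-1,\dots,1}(\cF)$ for the standard lex order $x_1\succ x_2\succ\dots\succ x_n$. The base case $n=1$ is immediate: since $|\text{Sm}(I(\cF))|=|\cF|=|D_1(\cF)|$, a direct inspection of the four possible set systems in $2^{[1]}$ confirms the claim.

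For the inductive step, let $\cF_0=\{F\in\cF\,:\,n\notin F\}$ and $\cF_1=\{F\setminus\{n\}\,:\,F\in\cF,\ n\in F\}$, both viewed as subfamilies of $2^{[n-1]}$. Specialized to set systems and exponent vectors with $w_n\in\{0,1\}$, the FRR recursion says that $\ve x^{\ve w}$ is a standard monomial of $I(\cF)$ if and only if $\ve x^{\ve w'}$, obtained by dropping the last variable, lies in $\text{Sm}(I(\cF_0))\cup\text{Sm}(I(\cF_1))$ when $w_n=0$, and in $\text{Sm}(I(\cF_0))\cap\text{Sm}(I(\cF_1))$ when $w_n=1$. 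The key technical step, which I expect to be the crux of the argument, is the commutation lemma: for every $j\in[n-1]$ and $\alpha\in\{0,1\}$ one has $(D_j(\cF))_\alpha=D_j(\cF_\alpha)$. This is verified by a direct case analysis on the two-union definition of $D_j$, exploiting that $j\neq n$ so removing $j$ does not affect membership of $n$. Iterating the lemma yields $(D_{n-1,\dots,1}(\cF))_\alpha=D_{n-1,\dots,1}(\cF_\alpha)$ for $\alpha\in\{0,1\}$.

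Finally, set $\mathcal{G}:=D_{n-1,\dots,1}(\cF)$. Combining the commutation lemma with the inductive hypothesis applied to $\cF_0,\cF_1\subseteq 2^{[n-1]}$ gives $\mathcal{G}_\alpha=\text{Sm}(I(\cF_\alpha))$ for $\alpha\in\{0,1\}$. Unwinding the definition of $D_n$ then shows
\begin{equation*}
D_n(\mathcal{G})=(\mathcal{G}_0\cup\mathcal{G}_1)\cup\bigl\{G'\cup\{n\}\,:\,G'\in\mathcal{G}_0\cap\mathcal{G}_1\bigr\},
\end{equation*}
which matches precisely the FRR description of $\text{Sm}(I(\cF))$ obtained in the preceding paragraph (sets not containing $n$ correspond to $w_n=0$, sets containing $n$ to $w_n=1$), completing the induction.
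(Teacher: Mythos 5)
Your proof is correct and follows essentially the same route the paper takes for the general Theorem~\ref{Smdownshiftgen}: induction on $n$ via the Felszeghy--R\'ath--R\'onyai recursion, with the key observation that the downshifts $D_j$, $j\neq n$, commute with taking the sections $\cF_0,\cF_1$, and a final comparison of $D_n$ with the recursion rule. The paper itself only cites Proposition~\ref{Smdownshift} from \cite{M1}, but your argument is exactly the $k=2$ specialization of the paper's proof, so there is nothing to add.
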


\smallskip

In what follows we demonstarte the connection between s-extremal families and standard monomials. The first key result in the algebraic characterization of s-extremal set systems was the description of the family of shattered sets using standard monomials. In \cite{M1,RM} it was proven that if $\cF\subseteq 2^{[n]}$ then
\begin{equation*}
\Sh(\cF)=\bigcup_{\text{all term orders}} \text{Sm}(I(\cF))=\bigcup_{\text{lex orders}} \text{Sm}(I(\cF)).
\end{equation*}
Since the number of standard monomials of $I(\cF)$ equals $|\cF|$ for every fixed term order, as a corollary we obtain Proposition~\ref{extr<=>smua} from the Introduction.

As mentioned earlier, for lex orders $\text{Sm}(I(\cF))$ can be computed in linear time, however the number of possible lex orders is $n!$, and so Proposition~\ref{extr<=>smua} does not offer directly an efficient method to check the s-extremality of a set system. However it turns out that we actually need only a significantly smaller collection of lex orders.

\begin{thm}[\cite{M1,RM}]\label{algthm}
Take $n$ orderings of the variables such that for every index $i$ there is one in which $x_i$ is the greatest element, and take the corresponding lex term orders. If $\cF\subseteq 2^{[n]}$ is not s-extremal, then among these we can find two term orders for which the sets of standard monomials of $I({\cF})$ differ.  
\end{thm}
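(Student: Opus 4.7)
The ``only if'' direction is immediate; I focus on the converse. Assume the standard monomials of $I$ agree for $\prec_1,\dots,\prec_n$, with common value $M$. The plan is to reduce to the finite point set setting, so that Theorem~\ref{genalgothm} applies directly.

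First, I would pass to the algebraic closure of $\F$: a reduced \gr basis is produced by Buchberger's algorithm using only polynomial arithmetic inside $\Fxv$, so it remains a reduced \gr basis of $I\cdot\overline{\F}[\ve x]$. Consequently $\text{Sm}(I)=\text{Sm}(I\cdot\overline{\F}[\ve x])$ for every term order, and I may assume $\F$ is algebraically closed.

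The central step is to associate to $I$ a finite point set $\cV\subseteq \F^n$ for which $\text{Lm}(I)=\text{Lm}(I(\cV))$ holds simultaneously for every term order $\prec$. Since the \gr fan of $I$ is a finite polyhedral complex, only finitely many distinct initial ideals arise as $\prec$ ranges over all term orders. Each corresponding \gr stratum --- the locus of zero-dimensional ideals of length $\dim_\F \Fxv/I$ with that prescribed initial ideal --- is open in the Hilbert scheme of points on $\F^n$, and the locus of radical ideals (vanishing ideals of finite point sets) is dense open in each nonempty stratum. Consequently a generic perturbation of $I$ ---~for instance, obtained by splitting a primary decomposition $I=\bigcap_j Q_j$ into small generic movements of the support points of the $Q_j$~--- should yield a radical ideal $I(\cV)$ sharing all of the finitely many initial ideals of $I$ at once.

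With such a $\cV$ secured, the conclusion is immediate: the hypothesis transfers, $\text{Sm}(I(\cV))=M$ for each of $\prec_1,\dots,\prec_n$, so Theorem~\ref{genalgothm} shows $\cV$ is extremal, i.e.~$\text{Sm}(I(\cV))=M$ for every term order, and since the initial ideals of $I$ and $I(\cV)$ coincide throughout, $\text{Sm}(I)=M$ for every $\prec$ as well. The main obstacle is the middle step: establishing the openness of the \gr strata and arranging that the radical locus can be hit inside the intersection of all the relevant strata; an alternative avoiding Hilbert-scheme machinery would be to trace through the proof of Theorem~\ref{genalgothm} and verify that each ingredient extends from finite point sets to arbitrary zero-dimensional ideals, using only the recursive description of standard monomials via univariate elimination polynomials.
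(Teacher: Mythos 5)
You have misread the statement. Theorem~\ref{algthm} concerns a set system $\cF\subseteq 2^{[n]}$, and its ideal $I(\cF)$ is \emph{by definition} the vanishing ideal of the finite point set $\cV(\cF)\subseteq\{0,1\}^n\subseteq\F^n$. You are therefore already in the finite point set setting: Theorem~\ref{genalgothm} with $k=2$ is literally the statement to be proved, and no reduction whatsoever is needed. The entire scaffolding you build --- passing to the algebraic closure, invoking the \gr fan, locating $I$ in the Hilbert scheme of points, perturbing a primary decomposition to land in the radical locus while staying inside the intersection of finitely many strata --- is aimed at a problem that does not arise here (and is more appropriate to Theorem~\ref{LZDgen}, the version for arbitrary zero dimensional ideals, though even there the paper sidesteps it).

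Beyond being misdirected, the middle step is a genuine gap, which you yourself flag as ``the main obstacle.'' You assert that each \gr stratum is open in the relevant Hilbert scheme and that the radical locus is dense open in each nonempty stratum, and then conclude that a generic perturbation yields a radical ideal sharing \emph{all} of $I$'s initial ideals simultaneously; none of this is argued, and in particular the simultaneous-preservation claim across all finitely many strata is exactly the hard part. By contrast, the paper's proof of Theorem~\ref{genalgothm} (and hence of Theorem~\ref{algthm}) is short and elementary: assuming $\text{Sm}(I(\cV))=\mathcal{S}$ agrees for the $n$ elimination orders, one takes any $\ve x^{\ve u}\notin\mathcal{S}$, writes its standard representation $f=\ve x^{\ve u}+\sum_{\ve x^{\ve v}\in\mathcal{S}}\alpha_{\ve v}\ve x^{\ve v}\in I(\cV)$ for one fixed order, notes that $\text{lm}(f)=\ve x^{\ve u}$ must hold for all $n$ orders, and deduces that if some $\ve x^{\ve v}$ with a nonzero coefficient failed to divide $\ve x^{\ve u}$ (so $v_i>u_i$ for some $i$), the elimination order for $x_i$ would be violated. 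Hence $f$ is degree dominated, $\ve x^{\ve u}$ is a leading monomial for every term order, and the count $|\text{Sm}(I(\cV))|=|\cV|$ forces $\text{Sm}(I(\cV))=\mathcal{S}$ universally. Your closing suggestion to ``trace through the proof of Theorem~\ref{genalgothm}'' is indeed the right move, but it is not an alternative to be explored later; it is the entire content of the proof, and the rest of your argument should be discarded.
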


Accordingly, by computing the standard monomials for $n$ lex orders we can decide the extremality of a set system in $O(n^2|\cF|)$ time.

\subsubsection*{Gr\"obner bases}

The algebraic results about s-extremal families also include a nice connection between s-extremal families and the theory of Gr\"obner bases. Given a pair of sets $H \subseteq S \subseteq [n]$ we define the polynomial 
\begin{equation*}
f_{S,H}(\textbf{x}) = \textbf{x}_H \cdot \prod_{i \in S\setminus H}(x_i -1). 
\end{equation*}
A useful property of these polynomials is that for a set $F\subseteq [n]$ we have $f_{S,H}(v_F) \neq 0$ if and only if $F\cap S = H$. However, actually much more is true.
\begin{thm}[\cite{M1},\cite{RM}]\label{ExtremalGroebner}
$\cF \subseteq 2^{[n]}$ is s-extremal if and only if there are polynomials of the form $f_{S,H}$, which together with $\big\{x_i^2-x_i\; : \; i \in [n] \big\}$ form a universal Gr\"obner basis of $I(\cF)$. 
\end{thm}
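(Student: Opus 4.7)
The $(\Rightarrow)$ direction is immediate, since $\prec_1,\ldots,\prec_n$ are among all term orders. For $(\Leftarrow)$, given a zero-dimensional ideal $I\triangleleft\Fxv$ with $\text{Sm}(I,\prec_i)=\mathcal{M}$ for every $i$, my plan is to produce a universal Gr\"obner basis of $I$ consisting of degree dominated polynomials, one for each minimal monomial outside $\mathcal{M}$. Once this is done the conclusion follows from a cardinality count: the leading monomials of such a universal basis generate the monomial upset $\mathbb{N}^n\setminus\mathcal{M}$ with respect to every term order, so $\text{Sm}(I,\prec)\subseteq\mathcal{M}$, and equality must hold since both sides have size $\dim_\F\Fxv/I$.

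To construct these polynomials, let $\mu_1,\ldots,\mu_r$ be the minimal generators (in divisibility) of the monomial ideal $\langle\mathbb{N}^n\setminus\mathcal{M}\rangle$. As $\mathcal{M}$ is a basis of the finite-dimensional $\F$-vector space $\Fxv/I$, each $\mu_j$ has a unique representative $\bar\mu_j\in\mathrm{span}_\F(\mathcal{M})$ with $\mu_j\equiv\bar\mu_j\pmod I$; set $g_j:=\mu_j-\bar\mu_j\in I$. The key claim, which is the heart of the proof, is that every monomial appearing in $\bar\mu_j$ is a \emph{proper} divisor of $\mu_j$. Granting the claim, $g_j$ is degree dominated with dominating term $\mu_j$, so $\mathrm{lm}_\prec(g_j)=\mu_j$ for every term order $\prec$, and $\{g_1,\ldots,g_r\}$ is the required universal Gr\"obner basis.

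The proof of the claim is the place where the hypothesis actually gets used. By uniqueness, $\bar\mu_j$ coincides with the normal form of $\mu_j$ modulo $I$ with respect to the reduced Gr\"obner basis of $\prec_i$, for \emph{every} $i$ (that basis having standard monomials equal to $\mathcal{M}$). Step-by-step reduction starting from $\mu_j$ only introduces monomials $\prec_i\mu_j$, so every monomial $m$ occurring in $\bar\mu_j$ satisfies $m\prec_i\mu_j$ for every $i\in\{1,\ldots,n\}$. Since $\prec_i$ is an elimination order for $x_i$, the inequality $m\prec_i\mu_j$ forces the $x_i$-degree of $m$ to be at most that of $\mu_j$; combining this over all coordinates $i$ yields $m\mid\mu_j$, and $m\neq\mu_j$ because $m\in\mathcal{M}$ while $\mu_j\notin\mathcal{M}$. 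The main obstacle is precisely this simultaneous use of all $n$ elimination orders on a single canonical form, which is exactly what makes the hypothesis ``$\text{Sm}(I,\prec_i)=\mathcal{M}$ for every $i$'' strong enough. The argument uses only the basis property of $\mathcal{M}$ in $\Fxv/I$ and the elimination structure of the $\prec_i$, so it goes through over an arbitrary field, bypassing the characteristic-zero tools of \cite{LZD}.
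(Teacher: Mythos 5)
The proposal does not prove the stated theorem. What you've written is in substance a proof of a different result — essentially Theorem~\ref{LZDgen} (``standard monomials the same for the elimination orders $\prec_1,\dots,\prec_n$ implies the same for every term order''), carried out via the construction in the backward direction of Theorem~\ref{genextrGrobner} (a universal Gr\"obner basis of degree-dominated polynomials). Your very first line, ``the $(\Rightarrow)$ direction is immediate, since $\prec_1,\ldots,\prec_n$ are among all term orders,'' already signals this: that sentence makes sense as the easy direction of Theorem~\ref{LZDgen}, but it does not correspond to either direction of Theorem~\ref{ExtremalGroebner}, whose hypotheses mention neither the orders $\prec_i$ nor zero-dimensional ideals, but instead a set system $\cF\subseteq 2^{[n]}$ and polynomials of the specific form $f_{S,H}$.

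Even reading the argument as charitably as possible — interpreting ``s-extremal'' via Proposition~\ref{extr<=>smua} and Theorem~\ref{algthm} as ``$\text{Sm}(I(\cF))$ is the same for all $n$ elimination orders'' (two reductions you would need to cite, not assume silently), and reading your construction as the forward direction of Theorem~\ref{ExtremalGroebner} — there remains a genuine gap. Your polynomials $g_j=\mu_j-\bar\mu_j$ are shown to be degree dominated with dominating term $\mu_j$, which is exactly what the generalized Theorem~\ref{genextrGrobner} requires. But Theorem~\ref{ExtremalGroebner} asserts more: the basis elements other than the $x_i^2-x_i$ must be of the form $f_{S,H}(\ve x)=\ve x_H\prod_{i\in S\setminus H}(x_i-1)$, i.e.\ have a very specific $\pm1$ coefficient pattern over the divisors of $\ve x_S$ lying between $\ve x_H$ and $\ve x_S$. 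Nothing in your argument identifies the normal form $\bar\mu_j$ with such a polynomial, nor produces the set $H$; indeed the reduced normal form and an $f_{S,H}$ need not coincide, and the passage from ``some degree dominated universal Gr\"obner basis exists'' to ``a universal Gr\"obner basis of $f_{S,H}$-type exists'' is exactly the extra content that distinguishes Theorem~\ref{ExtremalGroebner} from Theorem~\ref{genextrGrobner} — the paper's concluding remarks explicitly point to this as the ``more precise description of the degree dominated polynomials'' available in the $k=2$ case. To close this gap you would need a combinatorial argument tying $H$ to a non-realized trace $\{F\cap S : F\in\cF\}$ on a non-shattered set $S$, and showing that the resulting $f_{S,H}$'s still have every monomial in $\text{Lm}(I(\cF))$ divisible by some dominating term; none of that appears here.
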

We remark that in Theorem~\ref{ExtremalGroebner} it is enough to require a Gr\"obner basis of the above form for just one term order to have an s-extremal family. Also note that all the polynomials appearing in Theorem~\ref{ExtremalGroebner} are degree dominated. 

\section{Proofs}\label{sec:proofs}

Before proving our main results, we consider a rather technical statement that is needed to guarantee that the definition of extremality in this general setting is compatible with the special case of set systems. 

\begin{prop}\label{lex<=>all term orders}
Let $A\subseteq \F$ be a finite set of size $k$ and $\cV\subseteq A^n$ a finite point set. Then $\text{Sm}(I(\cV))$ is the same for every lexicographic term order if and only if $\text{Sm}(I(\cV))$ is the same for every term order.
\end{prop}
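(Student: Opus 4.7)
One direction is immediate since a lex order is a term order, so the work lies in showing that if $\text{Sm}(I(\cV))$ is the same for every lex order then it is the same for every term order. Let $S$ denote this common lex-value and let $\prec$ be an arbitrary term order. Because $\cV\subseteq A^n$ with $|A|=k$, the polynomial $\prod_{\alpha\in A}(x_j-\alpha)$ belongs to $I(\cV)$ for every $j$ and its leading monomial is $x_j^k$ under any term order; hence $\text{Sm}(I(\cV),\prec)$ is contained in $M:=\{\ve x^{\ve w}:0\le w_j<k\}$ for every $\prec$. Since $|\text{Sm}(I(\cV),\prec)|=|\cV|=|S|$, it will be enough to establish the inclusion $S\subseteq \text{Sm}(I(\cV),\prec)$.

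The central idea is to extract from the lex-order data a single universal Gr\"obner basis consisting of degree dominated polynomials. As $S$ is common to all lex orders, so is its complement $\text{Lm}(I(\cV))$ and hence so is the set of minimal monomial generators $\ve x^{\ve u_1},\dots,\ve x^{\ve u_m}$ of that ideal. For each $i$ I would let $r_i$ denote the unique polynomial supported on $S$ with $\ve x^{\ve u_i}-r_i\in I(\cV)$; uniqueness follows from $S$ being an $\F$-basis of $\F[\ve x]/I(\cV)$. Since this normal form relative to $S$ does not depend on the order, the set $\mathbb{G}:=\{g_i:=\ve x^{\ve u_i}-r_i\}$ is simultaneously the reduced Gr\"obner basis of $I(\cV)$ for every lex order. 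A short lemma then says: if $\ve x^{\ve v}\prec_\pi \ve x^{\ve u}$ for every lex order $\prec_\pi$, then $\ve v\le \ve u$ coordinate-wise (otherwise choose the lex order that puts an offending variable first). Applied to each $g_i$, whose leading monomial is $\ve x^{\ve u_i}$ under every lex order, this forces every monomial of $r_i$ to be a proper divisor of $\ve x^{\ve u_i}$; that is, each $g_i$ is degree dominated.

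Now fix $\ve x^{\ve w}\in S$ and suppose for contradiction that $\ve x^{\ve w}\in \text{Lm}(I(\cV),\prec)$: pick $f=c_{\ve w}\ve x^{\ve w}+\sum_{\ve v}c_{\ve v}\ve x^{\ve v}\in I(\cV)$ with $c_{\ve w}\ne 0$ and every non-leading monomial $\ve x^{\ve v}\prec\ve x^{\ve w}$. Reducing $f$ via $\mathbb{G}$ must produce $0$. By degree-dominatedness, each reduction step replaces a monomial $\ve x^{\ve v}$ divisible by some $\ve x^{\ve u_i}$ by a linear combination of monomials whose exponents are strictly less than $\ve v$ coordinate-wise. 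In particular, $\ve x^{\ve w}$ itself is never touched (being in $S$, it is not divisible by any $\ve x^{\ve u_i}$), and any term $\ve x^{\ve v}\prec\ve x^{\ve w}$ of $f$ can only give rise to monomials coordinate-wise $\le\ve v$, none of which can equal $\ve x^{\ve w}$: for otherwise $\ve x^{\ve w}\mid \ve x^{\ve v}$ and hence $\ve x^{\ve w}\preceq\ve x^{\ve v}$, contradicting $\ve x^{\ve v}\prec\ve x^{\ve w}$. Hence $c_{\ve w}$ survives the full reduction, contradicting $f\in I(\cV)$.

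The delicate step is the final bookkeeping: verifying that no cascade of reductions can introduce an $\ve x^{\ve w}$ term from the lower monomials of $f$. This rests entirely on the degree dominated structure of $\mathbb{G}$, which the lex-order hypothesis delivers via the small divisibility lemma, together with the elementary fact that coordinate-wise containment $\ve w\le\ve v$ forces $\ve x^{\ve w}\preceq\ve x^{\ve v}$ in every term order.
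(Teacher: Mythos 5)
Your proof is correct and rests on the same central observation as the paper's: if a monomial is $\prec_\pi$-smaller than $\ve x^{\ve u}$ for \emph{every} lex order $\prec_\pi$, then it must divide $\ve x^{\ve u}$ (take the lex order with an offending variable first), so the relevant ideal elements are degree dominated. Where you diverge is in how you cash this in. The paper takes an arbitrary $\ve x^{\ve u}\notin S$, writes its normal form $f=\ve x^{\ve u}+\sum\alpha_{\ve v}\ve x^{\ve v}$, deduces degree domination, and then simply notes that the dominating term of a degree-dominated polynomial is its leading term under \emph{every} term order; this gives $\text{Lm}_{\mathrm{lex}}(I(\cV))\subseteq\text{Lm}_\prec(I(\cV))$ at once, and $|\text{Sm}_\prec|=|\cV|$ forces equality. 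You instead assemble the full reduced universal Gr\"obner basis $\mathbb{G}$ and then run a reduction-survival argument to establish the opposite inclusion $S\subseteq\text{Sm}_\prec(I(\cV))$, again finishing by counting. The reduction argument is correct (the key points being that cascading reduction of $\ve x^{\ve v}$ only produces divisors of $\ve x^{\ve v}$, that $\ve x^{\ve w}\in S$ is never touched, and that any resulting $S$-supported polynomial in $I(\cV)$ must be zero since $S$ is a basis of $\F[\ve x]/I(\cV)$), but it is more machinery than the problem needs: once $\mathbb{G}$ is degree dominated you already know $\text{lm}_\prec(g_i)=\ve x^{\ve u_i}$ for every $\prec$, which gives $\text{Lm}_{\mathrm{lex}}\subseteq\text{Lm}_\prec$ directly without any reduction bookkeeping. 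Your construction of a degree-dominated universal Gr\"obner basis is, in effect, the content of Theorem~\ref{genextrGrobner}, so you have proved a bit more than was asked, at the cost of a longer final step.
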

\begin{proof}
One direction is just trivial. For the other direction suppose that the standard monomials of $I(\cV)$ are the same for every lex order, and denote this collection of monomials by $\mathcal{S}$. Now take an arbitrary monomial $\ve x^{\ve u}\notin \m S$. Then $\ve x^{\ve u}$ is a leading monomial with respect to every lex order. Fix one lex order. Since standard monomials form a linear basis of the $\F$-vector space $\F[\textbf{x}]/I(\cV)$, each leading monomial, in particular $\ve x^{\ve u}$, has a representation by standard monomials, i.e. there are coefficients $\alpha_{\ve v}$, $\ve x^{\ve v}\in \mathcal{S}$, such that 
\begin{equation*}
f(\ve x)=\ve x^{\ve u}+\sum\limits_{x^{\ve v}\in \m S}\alpha_{\ve v} x^{\ve v}\in I(\cV).
\end{equation*}
As $\m S$ is the family of standard monomials for every lex order, this representation is necessarily valid for any other lex order as well, in particular, the leading monomial of $f$ is $\ve x^{\ve u}$ for any other lex order as well. This is possible only if $f(\ve x)$ is a degree dominated polynomial with dominating term $\ve x^{\ve u}$. Indeed suppose this is not the case, and there is some monomial $x^{\ve v}\in \m S$  that appears with a nonzero coefficient in $f$ and $x^{\ve v}\nmid x^{\ve u}$. For this there has to be an index $i$ for which $v_i>u_i$, but then for any lex order where $x_i$ is the largest variable we would have that $x^{\ve u}\prec x^{\ve v}$, contradicting our assumption that $\text{lm}(f)=x^{\ve u}$.

On the other hand, as any term order is compatible with multiplication by monomials, the dominating term of a degree dominated polynomial is necessarily its leading monomial for every term order. Therefore the leading monomial of $f$ is $x^{\ve u}$ for every term order. This results, that every monomial $\ve x^{\ve u}\notin \m S$ is a leading monomial for every term order. Adding the fact that the number of standard monomials, i.e. the number of non-leading monomials, is
$|\cV|$ for every term order, we get that the family of leading monomials, and hence the family of standard monomials of $I(\cV)$ is the same for every term order as desired.
\end{proof}

According to Proposition~\ref{lex<=>all term orders} it does not matter whether we define a finite set of points $\cV\subseteq A^n\subseteq \F^n$ to be extremal if $\text{Sm}(I(\cV))$ is the same only for every lex order, or for every term order. 

\subsubsection*{Downshift operations}

\begin{proof}[Proof of Theorem~\ref{Smdownshiftgen}]
Without loss of generality we may assume that $\prec$ is the standard lex order, i.e. $i_j = j$ for every $j$. To prove the statement we apply induction on $n$. For $n = 1$, $x^w\in \text{Sm}(I(\cV))$ and $w \in D_1(\cV)$ hold at the same time, namely when $w < |\cV|$, thus for $n = 1$ the statement holds. Now consider the case $n > 1$ and suppose that the statement is true for all values smaller than $n$. For $i\neq n$ the downshift
$D_i$ acts independently on the subsystems $\cV_{\beta}$, $\beta\in \{0,1,\dots,k-1\}$, namely we have
\begin{equation*}
D_{n-1,\dots,1}(\cV) = \bigcup_{\beta=0}^{k-1}\Big\{(\ve w,\beta)\ :\ \ve w\in D_{n-1,\dots,1}(\cV_{\beta})\Big\}.
\end{equation*}
According to the induction hypothesis we have $\text{Sm}(I(\cV_{\beta}))=D_{n-1,\dots,1}(\cV_{\beta})$  for every $\beta\in \{0,1,\dots,k-1\}$. However, then, when constructing $\text{Sm}(I(\cV))$ from the $\text{Sm}(I(\cV_{\beta}))$-s we apply the same rule for the exponent vectors as for the elements of $D_{n,n-1,\dots,1}(\cV)$ when constructing it from the $D_{n-1,\dots,1}(\cV_{\beta})$-s. More precisely we have $x^{(\ve w,w_n)} \in \text{Sm}(I(\cV))$ exactly in the same case when $(\ve w, w_n) \in D_{n,n-1,\dots,1}(\cV)$, namely when there are at least $w_n+1$ values $\beta\in \{0,1,\dots,k-1\}$ such that $\ve x^{\ve w} \in \text{Sm}(I(\cV_{\beta}))$  \big($\ve w\in D_{n-1,\dots,1}(\cV_{\beta})$\big). This finishes the proof.
\end{proof}

\subsubsection*{Gr\"obner characterization}

\begin{proof}[Proof of Theorem~\ref{genextrGrobner}]
First suppose that $\cV\subseteq \{0,1,...,k-1\}^n$ is extremal. Then by definition, $\text{Sm}(I(\cV))$ is the same for every term order. Denote this set of monomials by $\mathcal{S}$ and the collection of all minimal (with respect to division) monomials not belonging to $\mathcal{S}$ by $\mathbb{S}$. Note that in this case the family of leading monomials is also the same for every term order and $\mathbb{S}$ is just the set of minimal elements there. In particular for every leading monomial $\ve x^{\ve u}$ there is another leading monomial $\ve x^{\ve v}\in \mathbb{S}$ such that $\ve x^{\ve v}\mid\ve x^{\ve u}$.

From this point we follow the line of thinking from the proof of Proposition~\ref{lex<=>all term orders}. Each monomial $\ve x^{\ve u}\in \mathbb{S}$ is a leading monomial for every term order, in particular for the standard lex order as well. Accordingly, as before, $\ve x^{\ve u}$ has a representation by standard monomials with respect to the standard lex order, i.e. there are coefficients $\alpha_{\ve v}$, $\ve x^{\ve v}\in \mathcal{S}$ such that 
\begin{equation*}
f_{\ve u}(\ve x)=\ve x^{\ve u}+\sum\limits_{\ve x^{\ve v}\in \mathcal{S}}\alpha_{\ve v} x^{\ve v}\in I(\cV).
\end{equation*}
As by assumption $\text{Sm}(I(\cV))=\mathcal{S}$
for every term order, and except of $x^{\ve u}$ every monomial in $f_{\ve u}(\ve x)$ is a standard one, we necessarily have that $\text{lm}(f_{\ve u})=x^{\ve u}$ for every term order. However, in the same way as in the proof of Proposition~\ref{lex<=>all term orders}, this is possible only if $f_{\ve u}$ is degree dominated with dominating term $x^{\ve u}$. Put now
\begin{equation*}
\mathcal{G}=\Big\{f_{\ve u}\ :\ \ve x^{\ve u}\in \mathbb{S}\Big\}.
\end{equation*}
By the definition of $\mathbb{S}$ the family $\mathcal{G}$ is clearly a \gr basis of $I(\cV)$ for every term order, i.e. it is a universal \gr basis.

For the other direction suppose that we are given a finite family $\mathcal{G}\subseteq \F[\ve x]$ of degree dominated polynomials that form a universal \gr basis of $I(\cV)$. We prove that the fact that there is a common \gr basis for every term order, without knowing anything about the members of the \gr basis, already guarantees the extremality of $\cV$.

If we are given a \gr basis $\mathcal{G}$ of some ideal $I$ for a fixed term order, it determines $\text{Lm}(I)$ uniquely, and hence $\text{Sm}(I)$ as well, namely we have
\begin{equation*}
\text{Lm}(I)=\Big\{\ve x^{\ve u}\ :\ \exists g\in \mathcal{G} \mbox{ such that }\text{lm}(g)|\ve x^{\ve u}\Big\}.
\end{equation*}
Indeed, the containment in one direction follows from the definition of \gr bases. For the other direction note that if for some $g\in \mathcal{G}$ we have that $\text{lm}(g)|\ve x^{\ve u}$, then the polynomial $\frac{\ve x^{\ve u}}{\text{lm}(g)}g(\ve x)\in I$ shows that $\ve x^{\ve u}\in \text{Lm}(I)$.

However as $\mathcal{G}$ is a common \gr basis for every term order, it gives us the same family of standard monomials for every term order, and so the extremality of $\cV$ follows.
\end{proof}

We remark that similarly as in the case of Theorem~\ref{ExtremalGroebner}, in Theorem~\ref{genextrGrobner} it is also enough to require that $I(\cV)$ has a suitable \gr basis for some term order. 

\subsubsection*{Testing extremality}

\begin{proof}[Proof of Theorem~\ref{genalgothm}]
By contraposition it is enough to prove that if the standard monomials of $I(\cV)$ are the same for the above term orders, then $\cV$ is extremal. Accordingly suppose the condition holds, and denote the collection of standard monomials for the above term orders by $\mathcal{S}$. From now on we again follow the proof of Proposition~\ref{lex<=>all term orders}. Take an arbitrary monomial $\ve x^{\ve u}\notin \m S$. In this case $\ve x^{\ve u}$ is a leading monomial with respect to all of the $n$ term orders considered. Fix one of these term orders, and take the standard representation of $\ve x^{\ve u}$ with respect to this term order. 
\begin{equation*}
f(\ve x)=\ve x^{\ve u}+\sum\limits_{x^{\ve v}\in \m S}\alpha_{\ve v} x^{\ve v}\in I(\cV).
\end{equation*}
As $\m S$ is the family of standard monomials for the other $n-1$ term orders as well, the leading monomial of $f$ can be only $\ve x^{\ve u}$ for them as well, and then the same reasoning as before shows that this is possible only if $f$ is degree dominated with dominating term $x^{\ve u}$. Indeed, suppose
this is not the case, and there is some monomial $x^{\ve v}\in \m S$  that appears with a nonzero coefficient in $f$ and $x^{\ve v}\nmid x^{\ve u}$. Now there has to be an index $i$ for which $v_i>u_i$, but then for any elimination order with respect to $x_i$, in particular for the one in our collection, we would have that $x^{\ve u}\prec x^{\ve v}$, contradicting our assumption that $\text{lm}(f)=x^{\ve u}$ for that lex order. From this the extremality of $\cV$ follows exactly as in Proposition~\ref{lex<=>all term orders}.
\end{proof}

\subsubsection*{Zero dimensional ideals}

\begin{proof}[Proof of Theorem~\ref{LZDgen}]
The proof follows from Theorem~\ref{genalgothm} together with the universality property of standard monomials. For this just note that the proof of Theorem~\ref{genalgothm} does not use that $I(\cV)$ is a vanishing ideal, the fact that it is a zero dimensional ideal suffices.
\end{proof}

\section{Concluding remarks}\label{sec:concluding}

Given the algebraic characterization of s-extremal families, extremal points sets are a natural generalization, both from the combinatorial and from the algebraic point of view. As a next step it would be interesting to obtain further notable examples of such point sets and explore their combinatorial properties. 

In connection with Theorem~\ref{ExtremalGroebner} and Theorem~\ref{genextrGrobner} recall that in the case $k=2$ we have a more precise description of the degree dominated polynomials appearing in the \gr basis. Can we obtain a similar description in the general case? For this one could start with studying the family of pairs of sets appearing in Theorem~\ref{ExtremalGroebner} and try to understand under which conditions do the resulting polynomials together with the polynomials of the form $x_i^2-x_i$ form a \gr basis. 

Theorem~\ref{LZDgen} demonstrates that researchers from the algebra community are also interested in related problems. In this direction examples of extremal zero dimensional ideals, that are not vanishing ideals could help us to understand better the algebraic properties behind these structures.

\end{document}